\theoremstyle{definition}
\newtheorem{Def}{Definition}[section]
\theoremstyle{plain}
\newtheorem{Lem}[Def]{Lemma}
\newtheorem{Cor}[Def]{Corollary}
\newtheorem{Thm}[Def]{Theorem}
\newcommand{\e}{\mathrm{e}}
\def\N{\mathbb N}
\def\R{\mathbb{R}}
\title{A note on measure-geometric Laplacians}
\author{M.~Kesseb\"ohmer, T.~Samuel and H.~Weyer}
\address{Fachbereich 3 Mathematik, Universit\"at Bremen, Bibliothekstr. 1, 28359 Bremen, Germany}
\email{mhk@math.uni-bremen.de, tony@math.uni-bremen.de, hendrik@math.uni-bremen.de}
\date{\today}
\begin{document}

\maketitle

\begin{abstract}
We consider the measure-geometric Laplacians $\Delta^{\mu}$ with respect to atomless compactly supported Borel probability measures $\mu$ as introduced by Freiberg and Z\"ahle in 2002 and show that the harmonic calculus of $\Delta^{\mu}$ can be deduced from the classical (weak) Laplacian.  We explicitly calculate the eigenvalues and eigenfunctions of $\Delta^{\mu}$.  Further, it is shown that there exists a measure-geometric Laplacian whose eigenfunctions are the Chebyshev polynomials and illustrate our results through specific examples of fractal measures, namely Salem and inhomogeneous self-similar Cantor measures.
\end{abstract}

\section{Introduction}

Kigami constructed a Laplacian for fractal sets, in particular post-critically finite attractors of iterated function systems, as the limit of (normalised) difference operators on a sequence of finite graphs, which approximate the attractor, see for instance \cite{Ki01,St06}.  (Key examples of such attractors are the closed unit interval and the Sierpinski gasket.)  This theory has been further developed and extensively studied by Kigami, Kusuoka, Rogers, Strichartz and Teplayev to name but a few.  Another approach was developed by Goldstein \cite{Go87}, Kusuoka \cite{Ku87}, Barlow and Perkins \cite{BP88} and Lindstr{\o}m \cite{Li90} and has a probabilistic motivation. This approach defines a Laplacian as the infinitesimal generator of a Brownian motion, but certain assumptions on the class of attractors need to be made.  Again this approach uses finite graph approximation methods.  A different probabilistic approach has also been considered by Denker and Sato \cite{DS01,DS99} and Ju, Lau and Wang \cite{Lau12}.

Given an atomless Borel probability measure $\mu$ supported on a compact subset $K$ of $\R$, Freiberg and Z\"{a}hle \cite{ZF02} introduced a measure-geometric approach to define a (second order) differential operator $\Delta^{\mu}$.  This approach is motivated by the fundamental theorem of calculus and based on the works of Feller \cite{Fe57} and Kac and Kre\u{\i}n \cite{KK68}. In the case that $K$ is a fractal set, whereas the previous approaches relied on graph approximation methods, with this new approach such approximations are not required.  Indeed in \cite{ZF02} a harmonic calculus of $\Delta^{\mu}$ is developed.  In the special case when $\mu$ is a self-similar measure supported on a Cantor-like set, the authors prove that the eigenvalue counting function of $\Delta^{\mu}$ is comparable to the square-root function.

In this article we extend the results of \cite{ZF02} in that we give a complete solution to the eigenvalue problem.  Namely, for a given atomless Borel probability measure $\mu$ supported on $K$ where $\{ a, b\} \subset K \subset[a, b]$, for some real numbers $a < b$, with continuous distribution function $F_{\mu}$, we prove the following Theorem.  Indeed an indication that such a result should hold is given in \cite{Z05}.

\pagebreak

\begin{Thm}\label{thm:ev and ef general continuous}
Let the domain of the operator $\Delta^{\mu}$ be denoted by $\mathscr{D}_{2}^{\mu}$ and set $\lambda_{n} \coloneqq -(\pi n)^{2}$, for $n \in \N_{0}$.
\begin{enumerate}[(i)]
\item\label{thm:ev and ef general continuous_i} The eigenvalues of $\Delta^{\mu}$ on $\mathscr{D}_{2}^{\mu}$ under homogeneous Dirichlet boundary conditions are $\lambda_{n}$, for $n \in \N$, with corresponding eigenfunctions
\begin{align*}
f_{n}^{\mu}(x)\coloneqq \sin(\pi n \cdot F_{\mu}(x)), \quad \text{for} \; x\in [a, b].
\end{align*}
\item\label{thm:ev and ef general continuous_ii} The eigenvalues of $\Delta^{\mu}$ on $\mathscr{D}_{2}^{\mu}$ under homogeneous von Neumann boundary conditions are $\lambda_{n}$, for $n \in \N_{0}$, with corresponding eigenfunctions
\begin{align*}
g_{n}^{\mu}(x)\coloneqq \cos(\pi n \cdot F_{\mu}(x)),  \quad \text{for} \; x\in [a, b].
\end{align*}
\end{enumerate}
\end{Thm}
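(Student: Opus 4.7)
The guiding idea is to transfer the eigenvalue problem for $\Delta^{\mu}$ on $[a,b]$ to the classical second-derivative operator on the standard interval $[0,1]$ by means of the change of variables induced by the distribution function $F_{\mu}$. Since $\mu$ is atomless with $F_{\mu}(a)=0$ and $F_{\mu}(b)=1$, the map $F_{\mu}\colon [a,b]\to[0,1]$ is continuous and surjective, and the pushforward of $\mu$ under $F_{\mu}$ is one-dimensional Lebesgue measure on $[0,1]$. My plan is to show that, for every $f\in\mathscr{D}_{2}^{\mu}$, there is a canonical $\tilde{f}\colon [0,1]\to\R$ with $\tilde{f}\circ F_{\mu}=f$ which belongs to the natural $L^{2}$-domain of the weak Laplacian $d^{2}/ds^{2}$ on $[0,1]$, and that the identification $f\leftrightarrow\tilde{f}$ intertwines the two operators:
\[
\Delta^{\mu} f \;=\; \tilde{f}''\circ F_{\mu}.
\]

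To establish this I would work directly from the Freiberg–Z\"ahle definition $\Delta^{\mu}=\frac{d}{d\mu}\,\frac{d}{d\mu}$. The first step is to observe that any $f\in\mathscr{D}_{2}^{\mu}$ must be constant on each connected component of $[a,b]\setminus K$, so that $\tilde{f}$ is well-defined on $F_{\mu}(K)$ and can be extended linearly across any remaining gaps of $F_{\mu}(K)$ inside $[0,1]$. The change-of-variables identity $\int_{a}^{x} g\,d\mu=\int_{0}^{F_{\mu}(x)}\tilde{g}(s)\,ds$ then shows that a single application of $d/d\mu$ on the $[a,b]$-side corresponds to a single application of $d/ds$ on the $[0,1]$-side; iterating gives the equivalence $\Delta^{\mu}f=\lambda f$ on $[a,b]$ if and only if $\tilde{f}''=\lambda\tilde{f}$ on $[0,1]$, together with $f\in\mathscr{D}_{2}^{\mu}$ if and only if $\tilde{f}$ lies in the standard domain of the weak Laplacian on $[0,1]$.

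With the intertwining in hand, the theorem reduces to the classical spectral theory for $d^{2}/ds^{2}$ on $[0,1]$: under homogeneous Dirichlet conditions $\tilde{f}(0)=\tilde{f}(1)=0$ the complete set of eigenfunctions is $\{\sin(\pi n s)\}_{n\in\N}$ with eigenvalues $-(\pi n)^{2}$, and under homogeneous von Neumann conditions $\tilde{f}'(0)=\tilde{f}'(1)=0$ it is $\{\cos(\pi n s)\}_{n\in\N_{0}}$ with the same eigenvalues. Pulling back by $F_{\mu}$ produces exactly the functions $f_{n}^{\mu}$ and $g_{n}^{\mu}$ of the statement; the boundary conditions transfer correctly because $F_{\mu}(a)=0$, $F_{\mu}(b)=1$, and because the measure-geometric derivative at the endpoints corresponds, via the intertwining, to $\tilde{f}'(0)$ and $\tilde{f}'(1)$. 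The principal technical obstacle will lie in the second paragraph: the non-injectivity of $F_{\mu}$ across gaps of $K$ has to be handled carefully to guarantee both that the pullback is well-defined on $\mathscr{D}_{2}^{\mu}$ (so that no spurious eigenfunctions are manufactured from the gaps) and, conversely, that every classical eigenfunction pulls back to an admissible element of $\mathscr{D}_{2}^{\mu}$, which will in turn yield completeness of the systems $\{f_{n}^{\mu}\}$ and $\{g_{n}^{\mu}\}$ from their classical counterparts.
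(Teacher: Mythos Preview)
Your proposal is correct and rests on the same core idea as the paper's proof: transfer the eigenvalue problem for $\Delta^{\mu}$ on $[a,b]$ to the classical weak Laplacian on $[0,1]$ via the change of variables $s=F_{\mu}(x)$, using that the pushforward $\mu\circ F_{\mu}^{-1}$ is Lebesgue measure (the paper records this as a separate lemma).

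The difference is one of execution rather than strategy. You aim to set up a full intertwining at the level of domains, showing that $f\mapsto\tilde f$ is a bijection $\mathscr{D}_{2}^{\mu}\leftrightarrow\mathscr{D}_{2}^{\Lambda}$ conjugating $\Delta^{\mu}$ with $d^{2}/ds^{2}$, and you correctly flag the gap-handling as the main technical cost. The paper avoids characterising the domain correspondence altogether: it first verifies by a direct change-of-variables computation that the candidates $f_{n}^{\mu}$ (resp.\ $g_{n}^{\mu}$) are eigenfunctions, and then, for completeness, it uses the Volterra integral-equation representation of any eigenfunction $l_{\kappa}$ together with the pseudoinverse $\check F_{\mu}^{-1}$ and a Banach fixed-point argument to show that $l_{\kappa}\circ\check F_{\mu}^{-1}$ satisfies the corresponding Lebesgue integral equation, hence is a classical eigenfunction. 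Your route is cleaner conceptually and would yield the operator equivalence as a reusable byproduct; the paper's route is more economical for this specific statement, since the integral-equation formulation and the $\mu$-a.e.\ identity $\check F_{\mu}^{-1}\circ F_{\mu}=\mathrm{id}$ absorb the gap issue without needing a separate analysis of how $\mathscr{D}_{2}^{\mu}$-functions behave on components of $[a,b]\setminus K$.
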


 In the proof of this result we will see that apart from the constant functions the $\mu$-derivative of  von Neumann eigenfunctions  are Dirichlet eigenfunctions and vice versa. 
  
Moreover,  Theorem \ref{thm:ev and ef general continuous} demonstrates that the concept of fractal Fourier analysis (as defined by Bandt, Barnsley, Hegland and Vince \cite{BBHV14}) has interesting connections to the measure-geometric Laplacian $\Delta^{\mu}$ considered here.

Letting $N^{\mu}_{D} \colon \R \to \N$ denote the eigenvalue counting function of $-\Delta^{\mu}$ on $\mathscr{D}_{2}^{\mu}$ under homogeneous Dirichlet boundary conditions and letting $N^{\mu}_{N} \colon \R \to \N$ denote the eigenvalue counting function of $-\Delta^{\mu}$ on $\mathscr{D}_{2}^{\mu}$ under homogeneous von Neumann boundary conditions, the following can be obtained.

\begin{Cor}
\begin{align*}
\lim_{x \to \infty} \frac{N^{\mu}_{D}(x)}{(\pi x)^{1/2}} = \lim_{x \to \infty} \frac{N^{\mu}_{N}(x)}{(\pi x)^{1/2}} = 1
\end{align*}
\end{Cor}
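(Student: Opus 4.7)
The plan is to recognise this corollary as a direct counting consequence of Theorem~\ref{thm:ev and ef general continuous}. The first step is to observe that part~(\ref{thm:ev and ef general continuous_i}) pins down the Dirichlet spectrum of $-\Delta^{\mu}$ as precisely the sequence $\{(\pi n)^{2} : n \in \N\}$, each eigenvalue being simple; simplicity should be visible from the proof of the theorem, since pulling back by $F_{\mu}$ reduces the eigenvalue problem to the classical second-order ODE on $[0,1]$, whose Dirichlet eigenspaces are one-dimensional. Analogously, part~(\ref{thm:ev and ef general continuous_ii}) pins down the von Neumann spectrum as $\{(\pi n)^{2} : n \in \N_{0}\}$, again simple.

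Given this, the counting functions admit the closed form
\begin{align*}
N^{\mu}_{D}(x) = \lfloor \sqrt{x}/\pi \rfloor \qquad \text{and} \qquad N^{\mu}_{N}(x) = \lfloor \sqrt{x}/\pi \rfloor + 1.
\end{align*}
Dividing each expression by the stated normalization and letting $x \to \infty$, the floor function contributes only an $O(1)$ error that is wiped out and the additive constant in the von Neumann count drops out; in each case the ratio converges to the claimed limit by an elementary computation, and crucially both limits agree because the two counting functions differ by a bounded amount.

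The only step requiring genuine care is the \emph{completeness} of the eigenvalue lists in Theorem~\ref{thm:ev and ef general continuous}: a priori one must rule out eigenvalues outside the sequence $\{-(\pi n)^{2}\}$ and verify that no listed eigenvalue carries additional multiplicity. Here I would lean on the reduction to the classical (weak) Laplacian highlighted in the abstract, whose spectrum under both sets of boundary conditions is known to consist of exactly the $(\pi n)^{2}$, each simple. Once completeness and simplicity are in hand, the corollary is immediate, and I do not anticipate any substantial obstacle beyond making this reduction rigorous.
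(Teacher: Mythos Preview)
The paper states this corollary without proof, treating it as an immediate consequence of Theorem~\ref{thm:ev and ef general continuous}; your approach---read off the spectrum as $\{(\pi n)^{2}\}$ with simple multiplicity, write $N^{\mu}_{D}(x)=\lfloor\sqrt{x}/\pi\rfloor$ and $N^{\mu}_{N}(x)=\lfloor\sqrt{x}/\pi\rfloor+1$, then take the limit---is exactly the intended argument, and your remark that completeness and simplicity are inherited from the classical problem via the $F_{\mu}$-transfer is the right justification.

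One point you should not gloss over: if you actually carry out the ``elementary computation'' you will find
\[
\frac{\lfloor\sqrt{x}/\pi\rfloor}{(\pi x)^{1/2}}\;\longrightarrow\;\frac{1}{\pi^{3/2}}\neq 1.
\]
The normalisation printed in the corollary, $(\pi x)^{1/2}$, appears to be a typo for $\pi^{-1}x^{1/2}$; with the latter the limit is indeed $1$. Your write-up asserts convergence to ``the claimed limit'' without doing the arithmetic, so it inherits the paper's slip rather than catching it.
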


To illustrate our results, we show that there exists a measure-geometric Laplacian whose eigenfunctions are the Chebyshev polynomials and also consider fractal measures, namely Salem and inhomogeneous self-similar measures.

\subsection*{Outline} 

In Section \ref{sec:notation} we present the necessary definitions and basic properties.  In Section \ref{sec:main_result} we give the proof Theorem \ref{thm:ev and ef general continuous}; first proving the result for when $\mu$ is the one-dimensional Lebesgue measure $\Lambda$ restricted to the unit interval, and then using a transfer principle to prove the result for the general case.  We conclude this article by considering specific examples, namely when $F_{\mu}(x) = \arccos(-x)/\pi$, when $F_{\mu}$ is a Salem function, and when $\mu$ is an inhomogeneous self-similar measure (that is a measure singular to the Lebesgue measure).

\section{Preliminaries: Derivatives and the Laplacian with respect to measures}\label{sec:notation}

Following conventions, the natural numbers will be denoted by $\N$, the non-negative integers by $\N_{0}$ and the real numbers by $\R$.  Throughout let $a < b$ be two fixed real numbers.  The one-dimensional Lebesgue measure will be denoted by $\Lambda$.  Henceforth, we will be concerned with an atomless Borel probability measure $\mu$ supported on $K$ where $\{ a, b\} \subset K \subset[a, b]$.  We denote the set of $\R$-valued functions with domain $[a, b]$ which are square integrable with respect to $\mu$ by $\mathfrak{L}^{2}(\mu)$. We set $L^{2}(\mu)$ to be the space of (measure) equivalence classes of $\mathfrak{L}^{2}(\mu)$-functions with respect to $\mu$.  For a measurable function $f$ we write $f \in L^{2}(\mu)$ when there exists an equivalence class of $L^{2}(\mu)$ to which $f$ belongs.  For a given set $A \subseteq \R$, the characteristic function of $A$ is denoted by $\mathds{1}_{A}$.

We recall the definition of the operator $\Delta^{\mu}$ as given in \cite{ZF02}.  We define $\mathscr{D}_{1}^{\mu} \subseteq\mathfrak{L}^{2}(\mu)$ by
\begin{align}\label{eq:D1}
\mathscr{D}_{1}^{\mu} \coloneqq \left\{f\in\mathfrak{L}^2(\mu) \colon\! \exists f' \in L^ 2(\mu) \, \text{s.t.} \,
f(x) = f(a) + \int \mathds{1}_{[a,x]}(y) \cdot f' (y) \, \mathrm{d}\mu(y),
\, x \in [a, b] \right\}.
\end{align}

For $f \in \mathscr{D}_{1}^{\mu}$ the function $f' \in L^{2}(\mu)$ given in \eqref{eq:D1} is unique.  Moreover, every function in $\mathscr{D}_{1}^{\mu}$ is continuous on $[a, b]$.  (For a proof of the first statement see {\cite[Proposition 2.1]{ZF02}}; the second statement is a consequence of Lebesgue's dominated convergence theorem.)  Indeed from these observations, we conclude that if $f,g\in\mathscr{D}_{1}^{\mu}$ with $f\neq g$, then $\mu (\{x \in [a, b] \colon f(x)\neq g(x)\}) > 0$. Hence, there exists a natural embedding $\pi \colon \mathscr{D}_{1}^{\mu} \to L^{2}(\mu)$.  Unless otherwise stated, for both $\mathscr{D}_{1}^{\mu}$ and $\pi (\mathscr{D}_{1}^{\mu} )$ we write $\mathscr{D}_{1}^{\mu}$.  Letting $f\in\mathscr{D}_{1}^{\mu}$ and letting $f'$ be as in \eqref{eq:D1}, the operator
\begin{align*}
\nabla^{\mu}:\mathscr{D}_{1}^{\mu} &\to L^{2}(\mu),\; f \mapsto f',
\end{align*}
is called the \textit{$\mu$-derivative}.  We define $\mathscr{D}_{2}^{\mu} \subseteq \mathscr{D}_{1}^{\mu}$ by
\begin{equation*}
\mathscr{D}_{2}^{\mu} \coloneqq \left\{f\in\mathscr{D}_{1}^{\mu} \colon \nabla^{\mu}f\in\mathscr{D}_{1}^{\mu} \right\}.
\end{equation*}
The \textit{$\mu$-Laplace operator} is defined by
\begin{align*}
\Delta^{\mu} \colon \mathscr{D}_{2}^{\mu} &\to L^{2}_{\mu},\;
f \mapsto \nabla^{\mu} \circ \nabla^{\mu} (f)
\end{align*}
and is a non-positive operator \cite[Corollary 2.3]{ZF02}.

For $f \in \mathscr{D}_{2}^{\mu}$ Fubini's theorem gives the representation
\begin{align}\label{eq:Delta_Integral_eq}
f(x) = f(a) + \nabla^{\mu} f(a) \cdot F_{\mu}(x) + \int \mathds{1}_{[a, x]}(y) \cdot (F_{\mu}(x) - F_{\mu}(y)) \cdot \Delta^{\mu}f(y) \; \mathrm{d}\mu (y).
\end{align}
A function $f \in \mathscr{D}_{2}^{\mu}$ is said to satisfy \textit{homogenous Dirichlet boundary conditions} if and only if $f(a) = f(b) = 0$, whereas if $\nabla^{\mu}f(a) = \nabla^{\mu}f(b) = 0$, then we say that $f$ satisfies  \textit{homogenous von Neumann boundary conditions}.

From this  all \textit{$\mu$-harmonic functions} can be computed, namely functions for which $\Delta^{\mu}f\equiv 0$.  Indeed the set of $\mu$-harmonic functions is    a two-dimensional space and equal to 
\begin{align*}
\{x \mapsto A+ B \cdot F_{\mu}(x)\colon  A,B \in \R\}.
\end{align*}
Notice that the operator $\Delta^{\Lambda}$ is the classical weak Laplacian.

It is shown in the proof of \cite[Theorem 2.5]{ZF02} that if $l_{\kappa}$ is a solution to the eigenvlaue problem
\begin{align}\label{eq:eigenvalue_problem}
\Delta^{\mu}(l_{\kappa}) = \kappa \cdot l_{\kappa},
\end{align}
for some $\kappa \in \R$, under the boundary conditions $l_{\kappa}(a) = 0$ and $\nabla^{\mu} l_{\kappa}(a) = 1$, then it is unique and necessarily satisfies the Volterra type integral equation
\begin{align}\label{eq:integral equation lebesgue dirichlet}
l_{\kappa}(x) = F_{\mu}(x) + \kappa \int \mathds{1}_{[a, x]}(z) \cdot ( F_{\mu}(x) - F_{\mu}(z) ) \cdot  l_{\kappa}(z) \; \mathrm{d}\mu(z),\;x\in[a,b].
\end{align}
If $l_{\kappa}$ is a solution to the eigenvlaue problem \eqref{eq:eigenvalue_problem} under the boundary conditions $l_{\kappa}(a) = 1$ and $\nabla^{\mu} l_{\kappa}(a) = 0$, then it is unique and necessarily satisfies the Volterra type integral equation
\begin{align*}
l_{\kappa}(x) = 1 + \kappa \int \mathds{1}_{[a, x]} \cdot ( F_{\mu}(x) - F_{\mu}(z) )  \cdot l_{\kappa}(z) \; \mathrm{d}\mu(z),\;x\in[a,b].
\end{align*}

\section{Proof of Theorem \ref{thm:ev and ef general continuous}: Eigenfunctions and Eigenvalues}\label{sec:main_result}

Here we prove our main result Theorem \ref{thm:ev and ef general continuous}.  We divide the proof into two cases: the case when $\mu$ is the Lebesgue measure $\Lambda$ supported on the closed unit interval, and the general case when the distribution function $F_{\mu}$ of $\mu$ is continuous.  We will in fact deduce the general case from the specific case when $\mu = \Lambda$.

\subsection{The Lebesgue case}

\begin{Thm}\label{thm:lebesgue_case}  Let $\Lambda$ denote the Lebesgue measure restricted to the closed unit interval.
\begin{enumerate}[(i)]
\item Assuming homogeneous Dirichlet boundary conditions, the eigenvalues of $\Delta^{\Lambda}$ are $\lambda_{n} = -(\pi n)^{2}$, for $n \in \N$, with corresponding eigenfunctions $f_{n}^{\Lambda}(x) = \sin(\pi n \cdot x)$.
\item Assuming homogeneous von Neumann boundary conditions the eigenvalues of $\Delta^{\Lambda}$ are $\lambda_{n} = -(\pi n)^{2}$, for $n \in \N_{0}$, with corresponding eigenfunctions $g_{n}^{\Lambda}(x) = \cos(\pi n  \cdot x)$.
\end{enumerate}
\end{Thm}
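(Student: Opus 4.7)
The plan is to split the verification into two parts: confirming that the proposed functions are indeed eigenfunctions with the stated eigenvalues, and then ruling out any further eigenvalues.

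For the existence half, I would note that each $f_n^\Lambda$ and $g_n^\Lambda$ is smooth on $[0,1]$ and therefore lies in $\mathscr{D}_2^\Lambda$, with $\nabla^\Lambda$ coinciding with the classical derivative (since $\Delta^\Lambda$ is the weak Laplacian, as observed in the preliminaries). A direct computation then gives $\Delta^\Lambda f_n^\Lambda = -(\pi n)^2 f_n^\Lambda$ and $\Delta^\Lambda g_n^\Lambda = -(\pi n)^2 g_n^\Lambda$, and the respective homogeneous boundary conditions are immediate from $\sin(0) = \sin(\pi n) = 0$ and from the corresponding derivative identities for cosine.

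For the completeness half, I would invoke the Volterra uniqueness statements recalled just before the theorem. Given a Dirichlet eigenfunction $h$ with eigenvalue $\kappa$, let $l_\kappa$ denote the unique solution of the eigenvalue equation with $l_\kappa(0) = 0$ and $\nabla^\Lambda l_\kappa(0) = 1$, and set $c = \nabla^\Lambda h(0)$. The function $\tilde h = h - c \cdot l_\kappa$ is then an eigenfunction with $\tilde h(0) = 0$ and $\nabla^\Lambda \tilde h(0) = 0$; plugging these data into \eqref{eq:Delta_Integral_eq} reduces the problem to a homogeneous Volterra equation of the second kind in $\tilde h$, whose only solution is $\tilde h \equiv 0$. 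Thus $h = c \cdot l_\kappa$, and solving the ODE $l_\kappa'' = \kappa l_\kappa$ under the given initial data yields
\begin{align*}
l_\kappa(x) =
\begin{cases}
\sinh(\sqrt{\kappa}\, x)/\sqrt{\kappa}, & \kappa > 0,\\
x, & \kappa = 0,\\
\sin(\sqrt{-\kappa}\, x)/\sqrt{-\kappa}, & \kappa < 0.
\end{cases}
\end{align*}
The remaining Dirichlet condition $l_\kappa(1) = 0$ then admits a nontrivial $\kappa$ precisely when $\sqrt{-\kappa} = \pi n$ for some $n \in \N$, giving the full list $\lambda_n = -(\pi n)^2$. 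The von Neumann case proceeds identically from the second Volterra equation; the corresponding $l_\kappa$ is $\cosh(\sqrt{\kappa}\, x)$, $1$, or $\cos(\sqrt{-\kappa}\, x)$, and the condition $\nabla^\Lambda l_\kappa(1) = 0$ now selects $\kappa = -(\pi n)^2$ for $n \in \N_0$, the value $n = 0$ producing the constant eigenfunction.

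I do not foresee any serious obstacle: this is essentially the classical Sturm-Liouville problem on $[0,1]$, and all the analytic machinery needed (uniqueness for the Volterra equations) has already been imported from \cite{ZF02}. The only bookkeeping point is the distinct index sets: $\kappa \geq 0$ produces no nontrivial Dirichlet eigenfunction (hence $n \in \N$), whereas $\kappa = 0$ does yield a valid constant von Neumann eigenfunction (hence $n \in \N_0$).
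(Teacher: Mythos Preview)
Your argument is correct and complete. The paper, however, dispatches this theorem in two lines: it observes that $\Delta^{\Lambda}$ agrees with the classical weak Laplacian on continuous functions on $[0,1]$ and then simply appeals to standard Sturm-Liouville theory (citing \cite{GT12}) for both the eigenfunctions and the exhaustiveness of the list. In contrast, you effectively \emph{reprove} the relevant Sturm-Liouville computation from scratch, using the Volterra uniqueness statements imported from \cite{ZF02} (and equation \eqref{eq:Delta_Integral_eq}) to reduce to a one-parameter family of initial-value problems, which you then solve explicitly. Your route is more self-contained within the paper's framework and avoids the external reference, at the cost of some additional length; the paper's route is terser but leans on the reader knowing the classical result.
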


\begin{proof}
On the set of continuous functions supported on $[0, 1]$ the $\Lambda$-Laplacian agrees with the (classical) weak Laplacian.  Using this observation in tandem with the classical Sturm-Liouville theory (see for instance \cite{GT12}) the result follows.
\end{proof}

\subsection{The general case}

From here on we will always assume that $\mu$ is a Borel probability measure supported on $K$, where $\{ a, b\} \subset K \subset[a, b]$, with continuous distribution function $F_{\mu} \colon [a, b] \to [0, 1]$.

\begin{Lem}\label{lem:equalities of measures}
We have that
\begin{enumerate}[(i)]
\item\label{lem:equalities of measures_i} $\mu \circ F_{\mu}^{-1}= \Lambda$ and 
\item\label{lem:equalities of measures_ii} $\Lambda \circ F_{\mu}= \mu$.
\end{enumerate}
\end{Lem}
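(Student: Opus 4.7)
The plan is to verify both identities by checking that the measures in question agree on the $\pi$-system of closed subintervals of the respective domains, then invoking Carath\'eodory's uniqueness theorem (or Dynkin's $\pi$-$\lambda$ theorem). The only ingredients needed are the continuity and monotonicity of $F_{\mu}$, the normalisation $F_{\mu}(a)=0$, $F_{\mu}(b)=1$, and the atomlessness of $\mu$.

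For part \eqref{lem:equalities of measures_i}, fix $t\in[0,1]$ and set $y_{t}\coloneqq \sup\{x\in[a,b]\colon F_{\mu}(x)\leq t\}$. Continuity of $F_{\mu}$ gives $F_{\mu}(y_{t})=t$, and monotonicity yields $F_{\mu}^{-1}([0,t])=[a,y_{t}]$. Therefore
\begin{align*}
(\mu\circ F_{\mu}^{-1})([0,t]) \;=\; \mu([a,y_{t}]) \;=\; F_{\mu}(y_{t}) \;=\; t \;=\; \Lambda([0,t]),
\end{align*}
and since the collection $\{[0,t]\colon t\in[0,1]\}$ generates the Borel $\sigma$-algebra on $[0,1]$, the two measures coincide.

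For part \eqref{lem:equalities of measures_ii}, I interpret $(\Lambda\circ F_{\mu})(A)\coloneqq \Lambda(F_{\mu}(A))$ on Borel sets $A\subseteq[a,b]$. For any closed interval $[c,d]\subseteq[a,b]$, continuity and monotonicity of $F_{\mu}$ give $F_{\mu}([c,d])=[F_{\mu}(c),F_{\mu}(d)]$, whence
\begin{align*}
(\Lambda\circ F_{\mu})([c,d]) \;=\; F_{\mu}(d)-F_{\mu}(c) \;=\; \mu((c,d]) \;=\; \mu([c,d]),
\end{align*}
where the last equality uses that $\mu$ is atomless. Again, agreement on this $\pi$-system of intervals forces equality of the Borel measures. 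Alternatively, \eqref{lem:equalities of measures_ii} can be deduced from \eqref{lem:equalities of measures_i} by a change of variable using a right-continuous generalised inverse of $F_{\mu}$, but the direct computation above is shorter.

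The only mild subtlety is that $F_{\mu}$ need not be strictly increasing: it can be constant on each connected component of $[a,b]\setminus K$. This is harmless in both steps, since the flat portions of $F_{\mu}$ carry no $\mu$-mass (so the set $\{F_{\mu}=t\}$ contributes zero to the $\mu$-measure computed in \eqref{lem:equalities of measures_i}) and the image under $F_{\mu}$ of a closed interval remains a closed interval with the expected Lebesgue measure, thanks to continuity. I therefore do not anticipate any real obstacle; the argument is essentially the probabilistic tautology that $F_{\mu}(X)$ is uniform on $[0,1]$ whenever $X\sim\mu$ has continuous distribution function $F_{\mu}$.
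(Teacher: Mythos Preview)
Your proof is correct and follows essentially the same strategy as the paper's: both verify the two identities on the $\pi$-system of closed subintervals (you use the sub-family $\{[0,t]:t\in[0,1]\}$ for part~(i), which is an inconsequential simplification of the paper's choice of arbitrary $[a_{1},b_{1}]\subseteq[0,1]$) and then appeal to uniqueness of extension from a generating $\pi$-system. Your explicit remarks on the non-injectivity of $F_{\mu}$ and on atomlessness are, if anything, slightly more careful than the paper's own treatment.
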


\begin{proof}
It is sufficient to show the equality of the measures on a generator which is stable under intersection.  For this let $[a_{1},b_{1}]$ be an arbitrary closed subinterval of $[0,1]$.  For (\ref{lem:equalities of measures_i}) we set\linebreak $M \coloneqq \{ x \in [a,b] \colon a_{1} \leq F_{\mu}(x)\leq b_{1} \}$ and observe that
\begin{align*}
\mu(F_{\mu}^{-1}([a_{1},b_{1}])) = \mu( M ) = F_{\mu}(\sup \{ x \colon x\in M \} ) - F_{\mu}( \inf \{ x \colon x\in M \} ) = b_{1} - a_{1} = \Lambda([a_{1},b_{1}]).
\end{align*}
This yields the result stated in (i). For (\ref{lem:equalities of measures_ii}) we observe, for a subinterval $[a_{2}, b_{2}] \subseteq [a,b]$, that
\begin{align*}
\Lambda(F_{\mu}([a_{2}, b_{2}])) = \Lambda([F_{\mu}(a_{2}), F_{\mu}(b_{2})]) = F_{\mu}(b_{2}) - F_{\mu}(a_{2}) = \mu([a_{2},b_{2}]),
\end{align*}
yielding the required result.
\end{proof}

\begin{Lem}\label{lem:volterra}
Let $\kappa \in \R$ be given.  Under the boundary conditions $f(a) = A$ and $\nabla^{\mu} f (a) = B$, there exists a unique solution to the integral equation
\begin{align*}
f(x) = A + B \cdot F_{\mu}(x) + \kappa \int (F_{\mu}(x) - F_{\mu}(z)) \cdot f(z) \; \mathrm{d}\mu(z),\;x\in[a,b].
\end{align*}
\end{Lem}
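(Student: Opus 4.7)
The plan is to rewrite the integral equation as a fixed-point problem on $C([a,b])$ equipped with the sup-norm, and apply the Banach fixed-point theorem to an iterate of the associated Volterra operator. This is a classical argument and the only slightly non-standard ingredient is the appearance of the distribution function $F_{\mu}$ (and of the possibly fractal measure $\mu$) in the kernel.

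First I would define the operator $T \colon C([a,b]) \to C([a,b])$ by
\begin{align*}
Tf(x) \coloneqq A + B \cdot F_{\mu}(x) + \kappa \int \mathds{1}_{[a,x]}(z)\,(F_{\mu}(x) - F_{\mu}(z))\, f(z)\,\mathrm{d}\mu(z).
\end{align*}
That $Tf$ is continuous on $[a,b]$ whenever $f$ is follows from the continuity of $F_{\mu}$ together with Lebesgue's dominated convergence theorem, exactly as in the remark that functions in $\mathscr{D}_{1}^{\mu}$ are continuous. A solution to the integral equation in the statement is by definition a fixed point of $T$, so it suffices to show that $T$ has exactly one fixed point.

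The key step is the Volterra-type estimate: by induction on $n$, for $f_{1}, f_{2} \in C([a,b])$,
\begin{align*}
\bigl|T^{n}f_{1}(x) - T^{n}f_{2}(x)\bigr| \leq \frac{|\kappa|^{n}\,F_{\mu}(x)^{2n}}{(2n)!} \,\|f_{1} - f_{2}\|_{\infty}.
\end{align*}
The inductive step uses Fubini and the fact that $\int \mathds{1}_{[a,x]}(z)\,(F_{\mu}(x) - F_{\mu}(z))\,F_{\mu}(z)^{2n}\,\mathrm{d}\mu(z)$ integrates in closed form, by Lemma \ref{lem:equalities of measures}(\ref{lem:equalities of measures_i}), to $F_{\mu}(x)^{2n+2}/((2n+1)(2n+2))$; indeed this reduces after the substitution $u = F_{\mu}(z)$ to the ordinary Lebesgue integral $\int_{0}^{F_{\mu}(x)}(F_{\mu}(x)-u)\,u^{2n}\,\mathrm{d}u$. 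Since $F_{\mu}(x) \leq 1$, the resulting bound $\|T^{n}f_{1} - T^{n}f_{2}\|_{\infty} \leq |\kappa|^{n}/(2n)!\cdot\|f_{1}-f_{2}\|_{\infty}$ becomes a contraction for $n$ large enough, and the Banach fixed-point theorem yields a unique fixed point of $T^{n}$, hence of $T$.

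An alternative, perhaps more conceptual route would be to use Lemma \ref{lem:equalities of measures} as a genuine change of variables: setting $g(y) \coloneqq f(x)$ where $y = F_{\mu}(x)$ (this is consistent because $F_{\mu}(x_{1}) = F_{\mu}(x_{2})$ forces $\mu([x_{1},x_{2}])=0$ and therefore $f(x_{1}) = f(x_{2})$ by the integral equation itself), the problem transforms into the classical Volterra equation $g(y) = A + By + \kappa\int_{0}^{y}(y-u)g(u)\,\mathrm{d}u$ on $[0,1]$, equivalent to the initial value problem $g'' = \kappa g$, $g(0)=A$, $g'(0)=B$, whose unique solvability is standard. I expect the main obstacle (really a minor subtlety) to lie precisely in justifying that this change of variables is well-defined when $F_{\mu}$ has plateaus; the direct Picard argument outlined above sidesteps this issue altogether, which is why I would take it as the primary approach.
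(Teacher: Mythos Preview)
Your argument is correct. The paper also proves this by the Banach fixed-point theorem applied to the same operator $T$, but instead of iterating $T$ it equips $C([a,b])$ with the weighted (Bielecki) norm $\lVert f \rVert_{\alpha} \coloneqq \sup\{|f(x)|\,\e^{-\alpha F_{\mu}(x)} : x\in[a,b]\}$ and checks that, for $\alpha$ large enough relative to $|\kappa|$, the operator $T$ itself is a contraction in $\lVert\cdot\rVert_{\alpha}$. Both routes are standard for Volterra equations: the weighted-norm trick is shorter and avoids the induction, while your Picard-iterate estimate stays in the sup-norm and has the bonus of giving an explicit rate $\lvert\kappa\rvert^{n}/(2n)!$ for the convergence of successive approximations. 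Your alternative change-of-variables remark is exactly the transfer principle the paper exploits later, in the proof of Theorem~\ref{thm:ev and ef general continuous}, via the pseudoinverse $\check{F}_{\mu}^{-1}$.
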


\begin{proof}
For $\alpha > 0$, the set of continuous functions supported on $[a, b]$ equipped with the norm $\lVert f \rVert_{\alpha} \coloneqq \sup \{ \lvert f(x) \rvert \e^{-\alpha F_{\mu}(x)} \colon x \in [a, b] \}$ is complete.  For $\alpha > \kappa$ the operator
\begin{align*}
f(x) \mapsto A + B \cdot F_{\mu}(x) + \kappa \int (F_{\mu}(x) - F_{\mu}(z)) f(z) \; \mathrm{d}\mu
\end{align*}
is a contraction with respect to $\lVert \cdot \rVert_{\alpha}$.  The result follows from Banach fixed point theorem.
\end{proof}

We define the \textit{pseudoinverse} of the distribution function $F_{\mu}$ by
\begin{align*}
\check{F}_{\mu}^{-1} \colon [0, 1] &\to [a, b],\;
x \mapsto \inf \{ y \in [a, b] \colon F_{\mu}(y) \geq x \}.
\end{align*}
Notice that $F_{\mu} \circ \check{F}^{-1}_{\mu}(x) = x$, for all $x \in [0, 1]$, and that $\check{F}^{-1}_{\mu} \circ F_{\mu}(y) = y$, for $\mu$-almost all $y \in [a, b]$.  Also note that $\check{F}_{\mu}^{-1}(1) = b$ since $b$ belongs to the support of $\mu$.

We are now in a position to prove our main result, Theorem \ref{thm:ev and ef general continuous}.

\begin{proof}[Proof of Theorem \ref{thm:ev and ef general continuous}]
As the proofs under Dirichlet and von Neumann boundary conditions follow in an identical manner, we only present the proof of the result under Dirichlet boundary conditions, namely part (i). In essence, to obtain (ii) from (i) one only needs to interchange the functions $x \mapsto \sin(x)$ and $x \mapsto \cos(x)$ in what follows.

First we show that the functions $f_{n}^{\mu}$ are eigenfunctions.  Second we prove that if $l_{\kappa}$ is an eigenfunction of $\Delta^{\mu}$ with eigenvalue $\kappa$, then $l_{\kappa} \circ \check{F}^{-1}_{\mu}$ is an eigenfunction of $\Delta^{\Lambda}$ supported on $[0, 1]$.  Hence we conclude that $f_{n}^{\mu}$ are all of the eigenfunctions of $\Delta^{\mu}$ under Dirichlet boundary conditions.

By Lemma \ref{lem:equalities of measures}, we have that, for all $x \in [a, b]$,
\begin{align*}
\int \mathds{1}_{[a, x]} \cdot 2 \pi n \cdot \cos(2\pi n \cdot F_{\mu})\; \mathrm{d} \mu
&= \int \mathds{1}_{[0, F_{\mu}(x)]}(y) \cdot 2 \pi n \cdot \cos(2\pi n y)\; \mathrm{d} \Lambda(y)\\
&=\sin(2\pi n \cdot F_{\mu}(x))
= f_{n}^{\mu}(x).
\end{align*}
By the definition of the $\mu$-derivative, for all $x \in [a, b]$, we have that
\begin{align*}
f_{n}^{\mu}(x)
= f_{n}^{\mu}(a) + \int \mathds{1}_{[a, x]} \cdot \nabla^{\mu} f_{n}^{\mu} \; \mathrm{d}\mu
= \int \mathds{1}_{[a, x]} \cdot \nabla^{\mu} f_{n}^{\mu} \; \mathrm{d}\mu.
\end{align*}
It therefore follows that $\nabla^{\mu} f_{n}^{\mu} = 2\pi n \cdot \cos(2\pi n \cdot F_{\mu})$.  This together with Lemma~\ref{lem:equalities of measures} implies that, for all $x \in [a, b]$,
\begin{align*}
\nabla^{\mu}f_{n}^{\mu}(x)
&= 2\pi n - 4\pi^{2} n^{2} \int \mathds{1}_{[0, F_{\mu}(x)]}(y) \cdot  \sin(2\pi ny) \; \mathrm{d}\Lambda(y)\\
&= \nabla^{\mu}f_{n}^{\mu}(0) + \int \mathds{1}_{[0, x]} \cdot (- 4\pi^{2} n^{2}) \cdot \sin(2\pi n \cdot F_{\mu}) \; \mathrm{d}\mu,
\end{align*}
and hence that
\begin{align*}
\Delta^{\mu}f_{n}^{\mu}(x) = -4\pi^{2} n^{2} \cdot \sin(2\pi n \cdot F_{\mu}(x)) = -4\pi^{2}n^{2} \cdot f_{n}^{\mu}(x).
\end{align*}
Recall that $\Delta^{\mu}$ is a non-positive operator.  Suppose that $l_{\kappa}$ is an eigenfunction of $\Delta^{\mu}$, with eigenvlaue $\kappa \leq 0$ under Dirichlet boundary conditions.  By \eqref{eq:Delta_Integral_eq} and Lemma \ref{lem:volterra}, it follows that $\nabla^{\mu} l_{\kappa}(a) \neq 0$, and so without loss of generality, we may assume that $\nabla^{\mu} l_{\kappa}(a) = 1$.  Using \eqref{eq:integral equation lebesgue dirichlet} we observe that
\begin{align*}
l_{\kappa} \circ \check{F}_{\mu}^{-1}(x)
&= x + \kappa \int \mathds{1}_{[a, F_{\mu}(x)]}(\check{F}_{\mu}^{-1} \circ F_{\mu}(y)) \cdot (x - F_{\mu} \circ \check{F}_{\mu}^{-1} \circ F_{\mu}(y)) \cdot l_{\kappa} \circ \check{F}_{\mu}^{-1} \circ F_{\mu}(y) \;\mathrm{d}\mu(y)\\
&= x + \kappa \int \mathds{1}_{[a, F_{\mu}(x)]}(\check{F}_{\mu}^{-1}(y)) \cdot (x - F_{\mu} \circ \check{F}_{\mu}^{-1}(y)) \cdot l_{\kappa} \circ \check{F}_{\mu}^{-1}(y) \;\mathrm{d}\mu \circ F_{\mu}^{-1}(y)\\
&= x + \kappa \int \mathds{1}_{[0, x]}(y) \cdot (x - y) \cdot l_{\kappa} \circ \check{F}_{\mu}^{-1}(y) \;\mathrm{d}\Lambda(y).
\end{align*}
For $\alpha > -\kappa$, on the set of continuous functions supported on $[0, 1]$ the operator
\begin{align*}
f(x) \mapsto x + \kappa \int \mathds{1}_{[0, x]}(y) \cdot (x - y) \cdot f(y) \; \mathrm{d}\Lambda(y)
\end{align*}
is contractive with respect to the norm $\lVert \cdot \rVert_{\alpha}$, where the norm $\lVert \cdot \rVert_{\alpha}$ is as defined in the proof of Lemma \ref{lem:volterra}.  Banach's fixed point theorem implies that $l_{\kappa} \circ \check{F}_{\mu}^{-1}(x) = \sin(\sqrt{-\kappa} x) / \sqrt{-\kappa}$.  By the fact that $l_{\kappa}(a) = I_{\kappa}(b) = 0$, it follows that $l_{\kappa} \circ \check{F}_{\mu}^{-1}(0) = l_{\kappa} \circ \check{F}_{\mu}^{-1}(1) = 0$.  Therefore, $l_{\kappa} \circ \check{F}_{\mu}^{-1}$ is an eigenfunction of $\Delta^{\Lambda}$, and hence of the form $x \mapsto \sin(n \pi x)$.
\end{proof}
\section{Examples: Chebyshev Polynomials, Salem functions and Inhomogeneous Cantor maesures}

\subsection{Chebyshev Polynomials as eigenfunctions}\label{sec:Chebyshev}

For this example we choose $a = -b = -1$ and let $\Lambda$ denote the Lebesgue measure restricted to the interval $[-1, 1]$.  Consider the absolutely continuous probability measure $\mu$ supported on $[-1, 1]$ given by
\begin{align*}
\frac{\mathrm{d}\mu}{\mathrm{d}\Lambda} (x) = \frac{1}{\pi \sqrt{1-x^{2}}}.
\end{align*}
The distribution function $F_{\mu}$ can be determined explicitly:
\begin{align*}
F_{\mu}(x)
= \frac{1}{\pi} \int \frac{\mathds{1}_{[-1, x]}(t)}{\sqrt{1-t^{2}}} \; \mathrm{d}\Lambda(t)
= \frac{1}{\pi}\left(\arcsin\left(x\right)+\frac{\pi}{2}\right)
= \frac{1}{\pi} \arccos(-x).
\end{align*}
We remind the reader that the Chebyshev polynomials (of the first kind) are given by
\begin{align*}
T_{0}(x) \coloneqq 1, \quad
T_{1}(x) \coloneqq x, \quad
\text{and} \quad
T_{n+1}(x) \coloneqq 2 x T_{n}(x) - T_{n-1}(x);
\end{align*}
hence $T_{2k}$ is an even function and $T_{2k + 1}$ is an odd function, for $k \in \N_{0}$.  Moreover it is well known that the Chebyshev polynomials can also be defined through the following formula:
\begin{align*}
T_{n} (x) = \cos (n \arccos(x)),\;x\in [-1,1]. 
\end{align*}

This together with Theorem \ref{thm:ev and ef general continuous} implies that the von Neumann eigenfunctions $g_{n}^{\mu}$ are closely related to the Chebyshev polynomials, that is, for all $n \in \N_{0}$ and $x\in [-1,1]$,
\begin{align*}
g_{n}^{\mu}(x)
= \cos( \pi n \cdot F_{\mu}(x))
= \cos( n \cdot \arccos(-x))
= T_{n}(-x)
= (-1)^{n} \cdot T_{n}(x).
\end{align*}
\begin{figure}[h]
\centering
\subfloat[Plot of $g_{0}^{\mu}, g_{1}^{\mu}, g_{2}^{\mu}$ and $g_{3}^{\mu}$.]{
\resizebox{0.45\textwidth}{!}{
\includegraphics{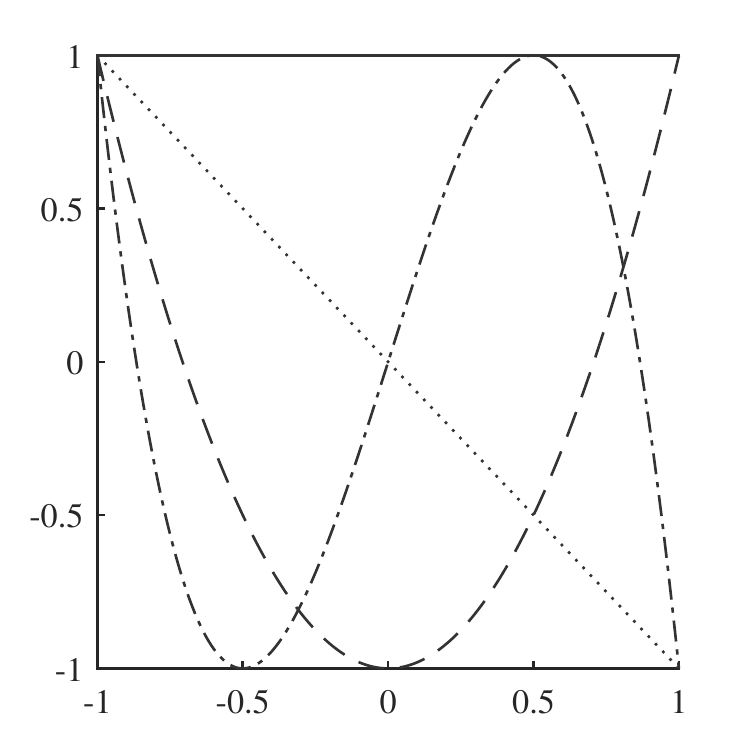}
	}}\hspace{1em}
\subfloat[Plot of $\nabla^{\mu} g_{0}^{\mu}, \nabla^{\mu} g_{1}^{\mu}, \nabla^{\mu}, g_{2}^{\mu}$ and $ \nabla^{\mu} g_{3}^{\mu}$.]{
\resizebox{0.45\textwidth}{!}{
\includegraphics{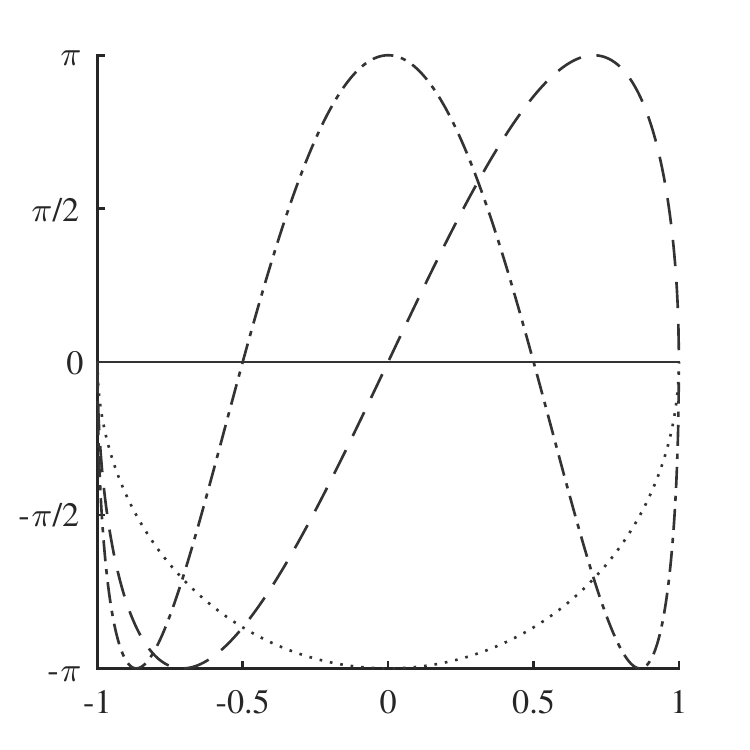}
	}}
\caption{\small Eigenfunctions of $\Delta^{\mu}$, where $\mathrm{d} \mu/ \mathrm{d} \Lambda = 1/(\pi \sqrt{1-x^{2}})$, under von Neumann boundary conditions, and their $\mu$-derivatives.}
\label{Fig1}
\end{figure}
We also observe that, for $n \in \N_{0}$  and $x\in [-1,1]$,
\begin{align*}
\nabla^{\mu} g_{n}^{\mu}(x) = (-1)^{n} \cdot \nabla^{\mu} T_{n}(x) = 
\pi \cdot \sqrt{1 - x^{2}} \cdot \frac{\mathrm{d}T_{n}}{\mathrm{d}x}(x) 
= \frac{(-1)^{n} \cdot \pi \cdot n \cdot ( x \cdot T_{n}(x) - T_{n+1}(x))}{\sqrt{1 - x^{2}}}.
\end{align*}
Moreover, it can be shown that $f_{n}^{\mu}(x) = \nabla^{\mu} g_{n}^{\mu}(x)$, for all $x \in [-1, 1]$ and $n \in \N$.  For further details and results concerning the Chebyshev polynomials we refer the reader to \cite{S92}.

\subsection{Salem measures}

Let us consider a class of examples of distribution functions which was studied by Salem \cite{Sa43}.  Namely we consider the family of absolutely continuous measures $\{ \mu_{p, q} \colon p, q \in (0, 1) \}$, whose distribution functions $\{ F_{\mu_{p, q}} \colon p, q \in (0, 1) \}$ arise from the following endorphisms of $[0, 1]$.  For $r \in (0, 1)$, we define 
\begin{align*}
S_{r}(x) \coloneqq \begin{cases}
x/r & \text{if} \; x \in [0, r],\\
(x - r)/(1-r) & \text{if} \; x \in (r, 1].
\end{cases}
\end{align*}
The maps $F_{\mu_{p,q}} \colon [0, 1] \circlearrowleft$ are then given by $S_{p} \circ F_{\mu_{p, q}} = F_{\mu_{p, q}} \circ S_{q}$.  One can immediately verify that $F_{\mu_{p,q}}$ is strictly monotonically increasing and, for $p \neq q$, is differentiable Lebesgue almost everywhere with derivative equal zero, when it exists.  For more details and further properties of these functions we refer the reader to \cite{JKPS09,KS09,Sa43}.  
\begin{figure}[h]
\centering
\subfloat[Plot of $f_{1}^{\mu}$.]{
\resizebox{0.375\textwidth}{!}{
\includegraphics{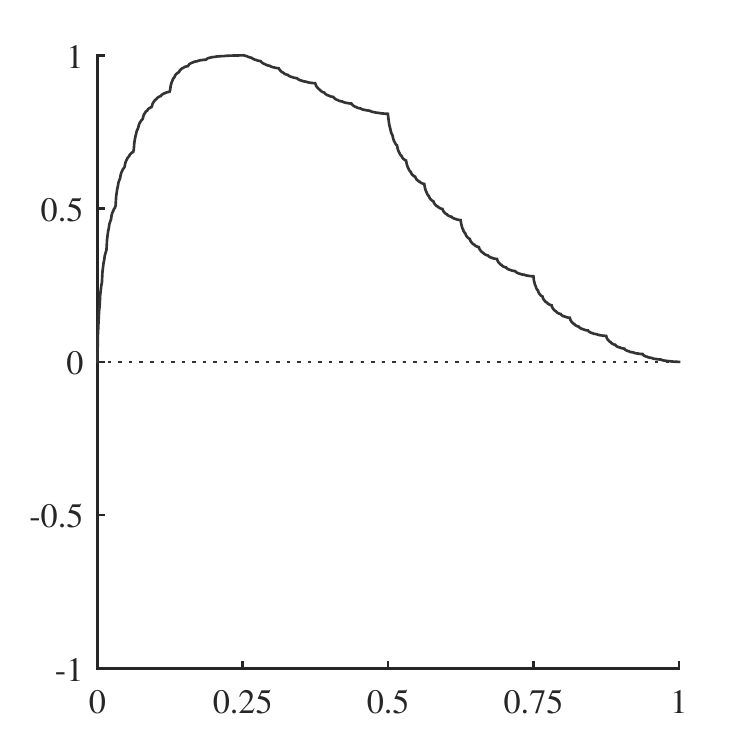}
	}}\hspace{2.5em}
\subfloat[Plot of $f_{2}^{\mu}$.]{
\resizebox{0.375\textwidth}{!}{
\includegraphics{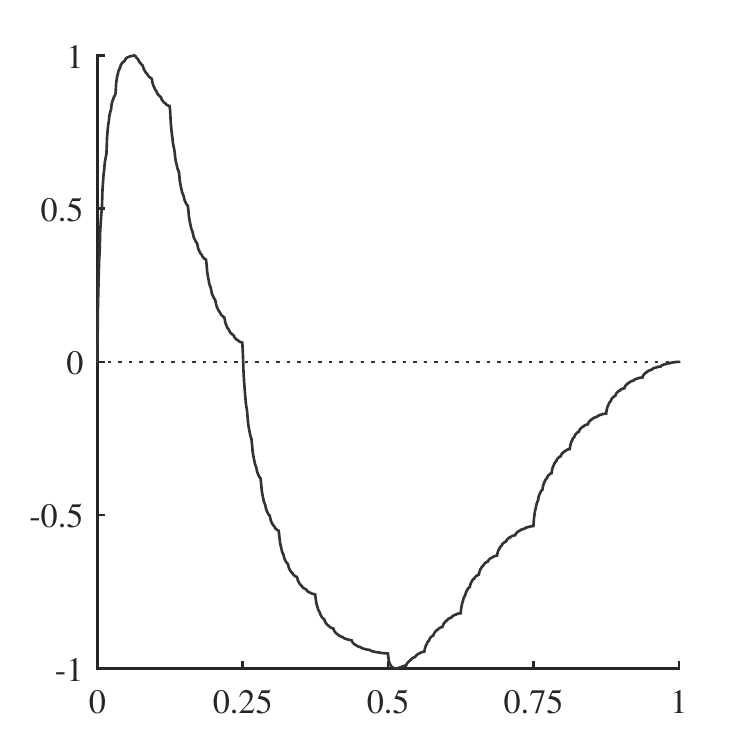}
	}}
	\\
\subfloat[Plot of $f_{3}^{\mu}$.]{
\resizebox{0.375\textwidth}{!}{
\includegraphics{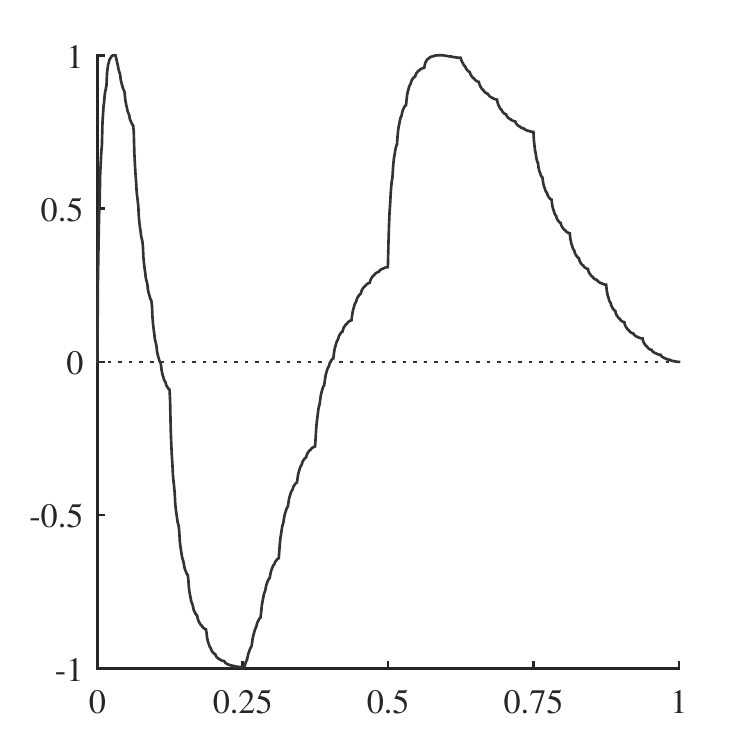}
	}}\hspace{2.5em}
	\subfloat[Plot of $f_{4}^{\mu}$.]{
\resizebox{0.4\textwidth}{!}{
\includegraphics{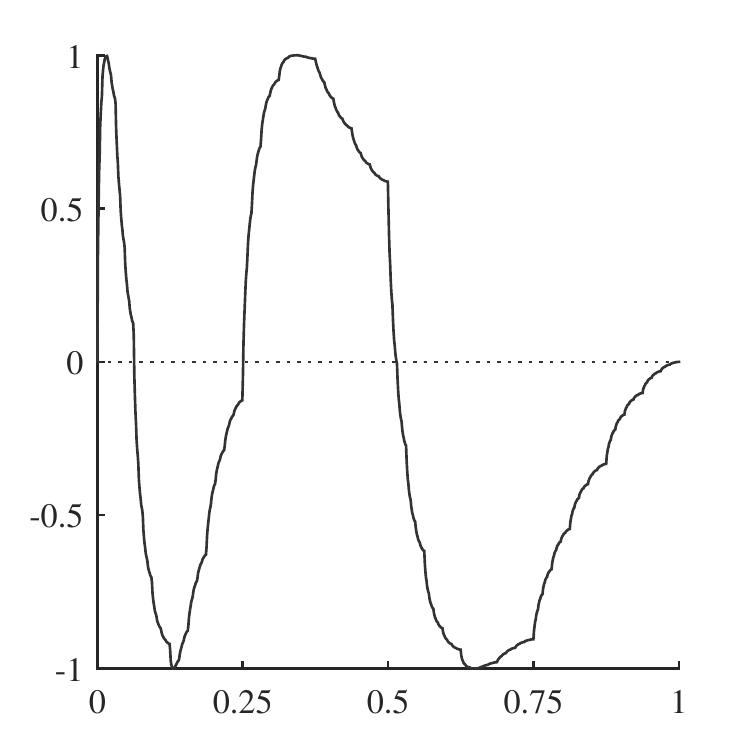}
	}}
\caption{\small Eigenfunctions of $\Delta^{\mu}$, where $\mu = \mu_{p, q}$ is the Salem measure on the unit interval with $p = 0.7$ and $q = 0.5$, under Dirichlet boundary conditions.}
\label{Fig2}
\end{figure}
In Figure \ref{Fig2}, we give graphical representations of the eigenfunctions associated to the eigenvalues $\lambda_{1}, \lambda_{2}, \lambda_{3}, \lambda_{4}$ of the $\mu_{0.7, 0.5}$-Laplacian, under Dirichlet boundary conditions.

\subsection{Inhomogeneous Cantor measures}

Given an interval $[a,b]$ and a finite set of contractions $S = \{s_{i} \colon [a, b] \circlearrowleft \}_{1 \leq i \leq N}$, such that $s_{i}([a, b]) \cap s_{j}([a, b]) = \emptyset$, there exists a unique non-empty set $E \subset [a, b]$ with
\begin{align}\label{eq:self_similar}
E = \bigcup_{i =1}^{N} s_{i}(E).
\end{align}
It is well known that such a set $E$ is homeomorphic to the Cantor set, and, in particular, is totally disconnected.  Further, if $\mathbf{p} = (p_{1}, \dots,p_{N})$ is a probability vector with $p_{i} \in (0,1)$, for all $i \in \{ 1, 2, \dots, N\}$, then there exists a unique atomless Borel probability measure $\mu$ supported on $E$ satisfying
\begin{align*}
\mu(A) = \sum_{i = 1}^{N} p_{i} s_{i}^{-1}(A),
\end{align*}
\begin{figure}[h]
\centering
\subfloat[Plot of $f_{2}^{\mu}$.]{
\resizebox{0.375\textwidth}{!}{
\includegraphics{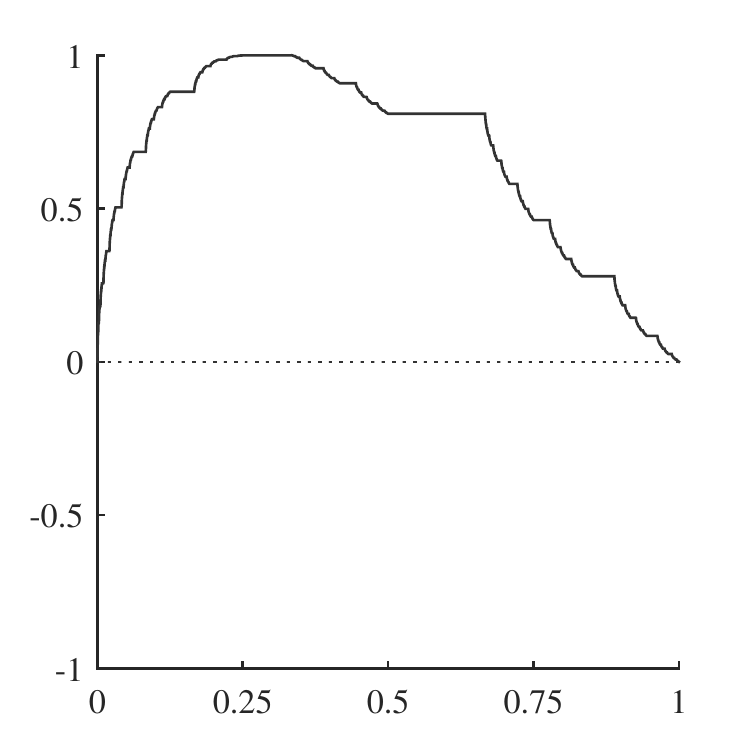}
	}}\hspace{2.5em}
\subfloat[Plot of $f_{4}^{\mu}$.]{
\resizebox{0.375\textwidth}{!}{
\includegraphics{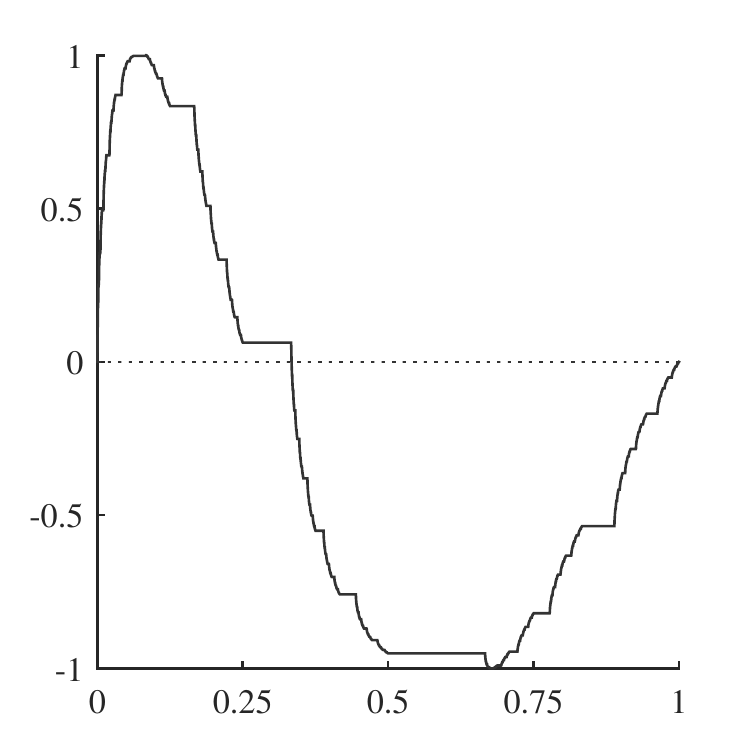}
	}}
	\\
\subfloat[Plot of $f_{6}^{\mu}$.]{
\resizebox{0.375\textwidth}{!}{
\includegraphics{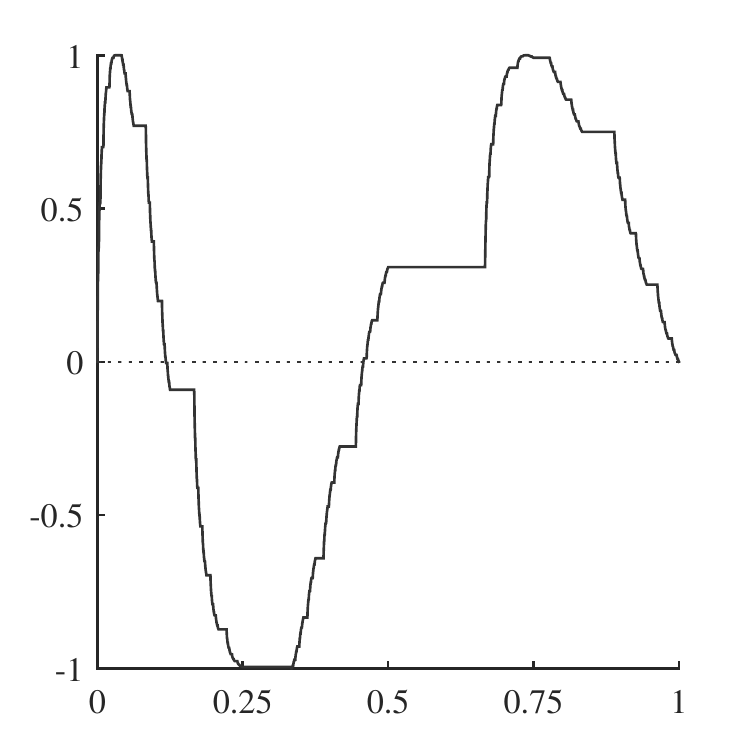}
	}}\hspace{2.5em}
	\subfloat[Plot of $f_{8}^{\mu}$.]{
\resizebox{0.375\textwidth}{!}{
\includegraphics{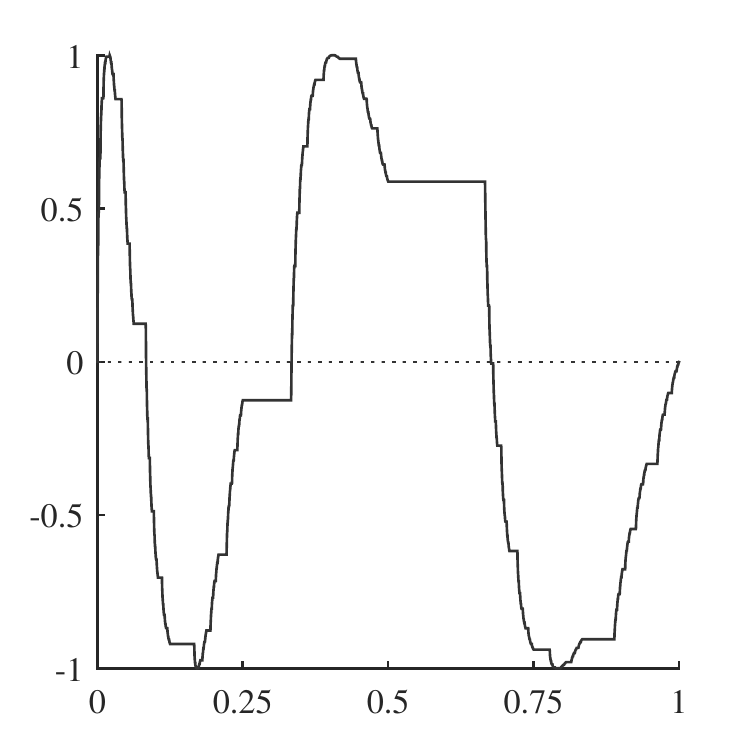}
	}}
\caption{\small Eigenfunctions of $\Delta^{\mu}$, where $[a, b] = [0, 1]$ and $\mu$ is the self-similar measure associated to $S = \{ x \mapsto 0.5 x, x \mapsto x/3 + 2/3 \}$ and $\mathbf{p} = (0.7, 0.3)$, under Dirichlet boundary conditions.
}
\label{Fig3}
\end{figure}
for all Borel measurable sets $A$.  If all of the contractions in $S$ are similarities then the set $E$ is called a \textit{self-similar set} and the measure $\mu$ is called a \textit{self-similar measure}.  Moreover, in this case, if all of the contraction ratios of the members of $S$ are equal, then $E$ is called a \textit{homogeneous self-similar set}; otherwise $E$ is called an \textit{inhomogeneous self-similar set}.  For further properties of the set $E$ and the measure $\mu$, and proof of the above results, we refer the reader to \cite{Fa13,Hu81}.

Let us consider the specific case when $[a, b] = [0, 1]$, $S = \{ x \mapsto 0.5 x, x \mapsto x/3 + 2/3 \}$ and $\mathbf{p} = (0.7, 0.3)$.  Here the unique set $E$ satisfying the equality given in \eqref{eq:self_similar} is an inhomogeneous self-similar set.  Letting $\mu$ denote the associated self-similar measure, in Figures \ref{Fig3}, we give graphical representations of the eigenfunctions associated to the eigenvalues $\lambda_{1}, \lambda_{2}, \lambda_{3}, \lambda_{4}$ of the $\mu$-Laplacian, under Dirichlet boundary conditions.


\begin{thebibliography}{99}
%
\bibitem{BP88}
M.~T.\ Barlow and E.~A. Perkins.
Brownian motion on the {S}ierpi\'nski gasket.
\textit{Probab.\ Theory Related Fields} \textbf{79}(4) 543--623 (1988).
%
\bibitem{BBHV14}
M.~Barnsley, C.~Bandt, M.~Hegland and A.~Vince.
Conjugacies provided by fractal transformations I : Conjugate measures, Hilbert spaces, orthogonal expansions, and flows, on self-referential spaces.
\textit{Pre-print: arxiv:1409.3309} (2014).
%
\bibitem{DS01}
M.~Denker and H.~Sato.
Sierpi\'nski gasket as a Martin boundary I.  Martin kernels.
\textit{Potential Anal.} \textbf{14}(3) 211--232 (2001).
%
\bibitem{DS99}		
M.~Denker and H.~Sato.
Sierpi\'nski gasket as a Martin boundary II.  The intrinsic metric.
\textit{Publ.\ Res.\ Inst.\ Math.\ Sci.} \textbf{35}(5) 769--794 (1999).
%
\bibitem{Fa13}
K.~Falconer.
\textit{Fractal geometry: Mathematical foundations and applications}, Edition 3.
Wiley, 2014.
%
\bibitem{Fe57}
W.~Feller.
Generalized second order differential operators and their lateral conditions.
\textit{Illinois J.\ Math.} \textbf{1} 459--504 (1957).
%
\bibitem{ZF02}
U.~Freiberg and M.~Z{\"a}hle.
Harmonic calculus on fractals - A measure geometric approach I.
\textit{Potential Analysis} \textbf{16}(1) 265--277 (2002).
%
\bibitem{Go87}
S.~Goldstein.
Random walks and diffusions on fractals.
\textit{Percolation theory and ergodic theory of infinite particle systems (Minneapolis, Minn., 1984--1985)},
Vol.~8 of \textit{IMA Vol.\ Math.\ Appl.}, 121--129.
Springer, 1987.
%
\bibitem{Hu81}
J.~E.~Hutchinson.
Fractals and self-similarity.
\textit{Indiana Univ.\ Math. J.}, \textbf{30} 713--747 (1981). 
%
\bibitem{JKPS09}
T.~Jordan, M.~Kesseb\"ohmer, M.~Pollicott and B.~O.~Stratmann.
Sets of non-differentiability for conjugacies between expanding interval maps.
\textit{Fund.\ Math.} \textbf{206} 161--183 (2009)
%
\bibitem{Lau12}
H.~Ju, K.-S.~Lau and X.-Y.~Wang.
Post-critically finite fractal and {M}artin boundary.
\textit{Trans.\ Amer.\ Math.\ Soc.} \textbf{364}(1) 103--118 (2012).
%
\bibitem{KK68}
I.~S.~Kac and G.~Kre\u{\i}n.
On the spectral functions of the string.
\textit{Amer.\ Math.\ Soc.\ Transl.} \textbf{103}(1) 19--102 (1974).
%
\bibitem{KS09}
M.~Kesseb\"ohmer and B.~O.~Stratmann.
H\"older-differentiability of Gibbs distribution functions.
\textit{Math.\ Proc.\ Cambridge Philos.\ Soc.} \textbf{147}(2) 489--503 (2009).
%
\bibitem{Ki01}
J.~Kigami.
\textit{Analysis on Fractals}.
Cambridge University Press, 2001.
%
\bibitem{Ku87}
S.~Kusuoka.
A diffusion process on a fractal.
\textit{Probabilistic methods in mathematical physics ({K}atata/{K}yoto, 1985)}, 251--274.
Academic Press, 1987.
%
\bibitem{Li90}
T.~Lindstr{\o}m.
\textit{Brownian motion on nested fractals}.
Memories of American Mathematical Society \textbf{420}, 1990.
%
\bibitem{S92}
Y.~Saad.
\textit{Numerical methods for large eigenvalue problems.}
Algorithms and Architectures for Advanced Scientific Computing,
John Wiley \& Sons, 1992.
%
\bibitem{Sa43}
R.~Salem.
On some singular monotonic functions which are strictly increasing.
\textit{Trans.\ Amer.\ Math.\ Soc.} \textbf{53} 427--439 (1943).
%
\bibitem{St06}
R.~S.~Strichartz.
\textit{Differential equations on fractals - A tutorial}.
Princeton University Press, 2006.
%
\bibitem{GT12}
G.~Teschl.
\textit{Ordinary differential equations and dynamical systems}.
American Mathematical Society \textbf{140}, 2012.
%
\bibitem{Z05}
M.~Z\"ahle.
Harmonic calculus on fractals -- A measure geometric approach II.
\textit{Trans.\ Amer.\ Math.\ Soc.} \textbf{357}(9) 3407--3423 (2005).
%
\end{thebibliography}
\end{document}